\chardef\bslash=`\\ 
\def\verbatim{\interlinepenalty\@M \@verbatim
  \leftskip\@totalleftmargin\advance\leftskip2pc
  \frenchspacing\@vobeyspaces \@xverbatim}
\newtheorem{thm}{Theorem}[section]
\newtheorem{cor}[thm]{Corollary}
\newtheorem{lem}[thm]{Lemma}
\begin{document}


\title
{On Hereditarily Normal Topological Groups}
\author{Raushan ~Z.~Buzyakova}
\email{Raushan\_Buzyakova@yahoo.com}
\keywords{topological group, hereditary normal space}
\subjclass{54H11, 22A05, 54D15}


\begin{abstract}{
In this paper we investigate 
hereditarily normal topological groups and their subspaces.
We prove that every compact subspace 
of a hereditarily
normal topological group is metrizable. To prove this statement we first show
that a hereditarily normal topological group with a non-trivial convergent sequence
has $G_\delta$-diagonal. This implies, in particular, that every countably compact subspace of a hereditarily normal topological
group with
a non-trivial convergent sequence is metrizable.  Another corollary is that under the Proper Forcing Axiom,
every countably compact subspace of a  hereditarily normal topological group is metrizable.
}
\end{abstract}

\maketitle
\markboth{Raushan Z. Buzyakova}{On Hereditarily Normal Topological Groups}
{ }

\section{Introduction}\label{S:intro}
It is a known fact that a hereditarily normal compact topological group is metrizable. This fact follows from the theorem of R. Engelking \cite{Eng1}  that every compact topological group contains a subspace homeomorphic to $\{0,1\}^\tau$, where $\tau$ is the weight of the group.  It is also a corollary to deep work of other mathematicians. For a proof of this theorem  and historical development around it we refer to \cite[Theorem 4.2.1]{AT} and \cite{Sh}. In this paper we show that
not only compact hereditarily normal topological groups are metrizable but any compact subset of any hereditarily normal topological group is metrizable as well (Theorem \ref{thm:compactainT5groups}). Thus, in the class of compact spaces, only metrizable ones  can be embedded into  hereditarily normal topological groups.

After it was established that every compact hereditarily normal topological group is metrizable 
it was natural to wonder if compactness could be relaxed to countable compactness. The example of Hajnal and Juh\'asz in \cite{HJ} closed this door in ZFC. More precisely, assuming the Continuum Hypothesis, Hajnal and Juh\'asz constructed a hereditarily normal hereditarily separable countably compact topological group which is not compact, hence  not metrizable. The example has many additional features and one of them is that it lacks non-trivial
convergent sequences. In \cite[Theorem 3]{Eis}, T. Eisworth showed  that this feature is not an accessory but a necessity. Eisworth's result is a corollary to one of our main theorems.  Namely, we prove that
every hereditarily normal topological group with a non-trivial convergent sequence has a $G_\delta$-diagonal 
(Theorem \ref{thm:T5groupwithsequence}). This result and the theorem of Chaber \cite{Ch} imply that every countably compact subset of a   hereditarily normal topological group with a non-trivial convergent
sequence is metrizable.  
In \cite{NSV}, Nyikos, Soukup, and Velickovic proved that under  the Proper Forcing Axiom, every countably compact hereditarily normal space is sequentially compact. This theorem and the mentioned 
 Theorem \ref{thm:T5groupwithsequence} of this paper imply  that under the Proper Forcing Axiom,
every countably compact subset of a  hereditarily normal topological group is metrizable. This, in its turn, implies another earlier result of Eisworth \cite[Corollary 10]{Eis} that under PFA, every countably compact hereditarily
normal topological group is metrizable.
 We would like to mention that this work was inspired by the well-known necessity condition of Katetov
for hereditary normality of the product of two spaces \cite[Theorem 1]{Kat}. 

In notation and terminology we will follow \cite{Eng}. We reserve the symbol $\star$ for the binary group operation of a group $G$ and letter $e$ for the neutral element of $G$. Following a group theory convention, we will omit the group binary operation symbol in standard situations.
In particular, for elements $a,b\in G$ we will write $ab$ instead of $a\star b$. However, in a few places in this paper, the use of the symbol
$\star$ will be necessary for the sake of clarity and in some cases to stress one's attention to the  relation of Katetov's argument to our work.

A space $X$ has {\it a $G_\delta$-diagonal} if the diagonal 
$\{\langle x,x\rangle: x\in X\}$ is the intersection of a countable family of its open neighborhoods in $X\times X$. {\it A  non-trivial convergent sequence} is a space homeomorphic to the subspace of the reals $\{0\}\cup \{1/n:n=1,2,3,...\}$.
Since we will often switch our attention from a given space  to its subspaces and vice versa we agree that  when dealing with a space $X$, its subspace $Y$, and subsets $S\subset X$ and $P\subset Y$, by $\bar S$ we denote the
closure of $S$ in $X$ and by $cl_Y(P)$ the closure of $P$ in $Y$. All spaces are assumed to be $T_1$.

\section{Results}\label{S:results}

\par\bigskip
For our argument we will need the following extract from Katetov's proof of his theorem in \cite{Kat}. For convenience, we also give a sketch of  Katetov's proof with some notational changes that fit our goal.
\par\bigskip\noindent
\begin{thm}\label{thm:katetov}{\rm ({\bf Katetov \cite[extract from Theorem 1]{Kat}})}
Let $T$ be a topological space and let $t\in T$ have uncountable pseudocharacter in $T$.
Also let $S$ be a separable topological space and let $s\in S$ be a limit point of $S$.
Then the sets $A = [S\times \{t\}]\setminus \{\langle s,t\rangle \}$ and 
$B = [\{s\}\times T]\setminus \{\langle s,t\rangle\}$
are closed and disjoint sets in $Z= [S\times T]\setminus \{\langle s,t\rangle \}$ that cannot be separated by open
neighborhoods in $Z$. 
\end{thm}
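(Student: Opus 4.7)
The plan is to follow the classical Katetov-style argument, adapted to the slightly asymmetric hypothesis (separability only on the $S$ side, uncountable pseudocharacter only at $t$). The easy part is that $A$ and $B$ are closed in $Z$: since $T_1$ spaces have singletons closed, both $S\times\{t\}$ and $\{s\}\times T$ are closed in $S\times T$, so their traces on $Z$ are closed in $Z$. Disjointness is automatic because the unique common point $\langle s,t\rangle$ has been removed.

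The substance is to rule out separation. Suppose, toward a contradiction, that $U\supset A$ and $V\supset B$ are open in $Z$ with $U\cap V=\varnothing$. First I would fix a countable dense subset $D\subset S$ with $s\notin D$; this is possible because $s$ is a limit point of $S$, so $S\setminus\{s\}$ is open and meets every open set of $S$, and consequently $D\setminus\{s\}$ is dense whenever $D$ is. For each $d\in D$ the point $\langle d,t\rangle$ lies in $A\subset U$, so by the definition of the product topology there is a basic open rectangle $N_d\times M_d\subset U$ with $d\in N_d$ and $t\in M_d$.

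The key step uses the uncountable pseudocharacter of $t$. The family $\{M_d:d\in D\}$ is a countable collection of open neighborhoods of $t$ in $T$, so its intersection cannot equal $\{t\}$; pick $t^{*}\in\bigcap_{d\in D}M_d$ with $t^{*}\neq t$. Then $\langle s,t^{*}\rangle\in B\subset V$, so there is an open rectangle $W\times V^{*}\subset V$ with $s\in W$ and $t^{*}\in V^{*}$. Density of $D$ produces some $d\in D\cap W$; by our choice of $D$ this $d$ is different from $s$. Now $\langle d,t^{*}\rangle$ lies in $W\times V^{*}\subset V$ and simultaneously in $\{d\}\times M_d\subset U$, contradicting $U\cap V=\varnothing$.

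I expect the only genuinely delicate point to be the careful bookkeeping around the punctured space $Z$: one must check that the witnesses $\langle s,t^{*}\rangle$ and $\langle d,t^{*}\rangle$ really lie in $Z$ (the first because $t^{*}\neq t$, the second because $d\neq s$), and that the chosen rectangles can indeed be taken inside $Z$ rather than just inside $S\times T$. Once that is arranged, the argument is a clean application of the pigeonhole provided by uncountable pseudocharacter against countable density.
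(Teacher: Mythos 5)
Your proposal is correct and follows essentially the same argument as the paper: a countable dense set in $S$, a neighborhood of $t$ for each of its points contained in the open set around $A$, the uncountable pseudocharacter of $t$ to extract a point $t^{*}\neq t$ in the countable intersection, and density to push the resulting horizontal set up to $\langle s,t^{*}\rangle\in B$. The only cosmetic difference is that the paper shows directly that $\operatorname{cl}_Z(U)$ meets $B$ for any open $U\supset A$, while you phrase it as a contradiction with a second open set $V\supset B$; the content is identical.
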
 
\begin{proof} ({\it Follows  Katetov's argument.}) Closedness and disjointness are clear. Let $U$ be a neighborhood
of $A$ in $Z$. We need to show that $cl_Z(U)$ meets $B$. Fix a countable set $D\subset S$ which is dense in $S$.
For each $x\in D\setminus \{s\}$, 
fix a neighborhood $U_x$ of $t$ in $T$ such that
$\{x\}\times U_x\subset U$. Since $t$ has uncountable pseudocharacter, we conclude that 
there exists $y\in \bigcap \{U_x: x\in D\setminus \{s\}\}$ distinct from $t$.
This means that $[D\setminus \{s\}]\times \{y\}$ is in $U$. Since $D$ is dense in $S$, 
we conclude that $\langle s,y\rangle\in cl_Z(U)\cap B$.

\end{proof}

\par\bigskip\noindent
\begin{lem}\label{lem:pseudocharacteratneutral}{\rm ({\bf Folklore})}
Let $G$ be a topological group. If the diagonal $\{\langle g,g\rangle :g\in G\}$  has uncountable pseudocharacter
in $G\times G$, then the neutral element $e$ of $G$ has uncountable pseudocharacter in $G$.
\end{lem}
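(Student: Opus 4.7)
The plan is to prove the contrapositive: if the neutral element $e$ has countable pseudocharacter in $G$, then the diagonal of $G$ has countable pseudocharacter in $G\times G$, i.e.\ is a $G_\delta$ subset.

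First I would fix a countable family $\{U_n:n\in\omega\}$ of open neighborhoods of $e$ in $G$ with $\bigcap_{n}U_n=\{e\}$. The goal is to manufacture, from this sequence, a countable family of open neighborhoods of $\Delta_G=\{\langle g,g\rangle:g\in G\}$ in $G\times G$ whose intersection is exactly $\Delta_G$.

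The key device is the continuous map
\[
\phi:G\times G\to G,\qquad \phi(g,h)=g^{-1}h,
\]
whose continuity is immediate from the joint continuity of multiplication and the continuity of inversion. Observe that $\phi^{-1}(e)=\Delta_G$, since $g^{-1}h=e$ precisely when $g=h$. Setting $V_n=\phi^{-1}(U_n)$ produces an open subset of $G\times G$ that contains $\Delta_G$, and
\[
\bigcap_{n\in\omega}V_n=\phi^{-1}\Bigl(\bigcap_{n\in\omega}U_n\Bigr)=\phi^{-1}(\{e\})=\Delta_G.
\]
Hence $\Delta_G$ is a $G_\delta$ in $G\times G$, which is equivalent to the diagonal having countable pseudocharacter there, contradicting the hypothesis.

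There is really no serious obstacle here; the only point to keep in mind is simply that the statement about the pseudocharacter of the diagonal is a set-pseudocharacter assertion (existence of a $G_\delta$ representation of $\Delta_G$), and the map $\phi$ transfers a countable pseudobase at $e$ to a countable pseudobase at $\Delta_G$ by preimage. The homogeneity built into a topological group, encapsulated in the continuity of $\phi$, is what makes the pseudocharacter of the single point $e$ control the pseudocharacter of the entire diagonal.
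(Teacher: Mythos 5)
Your proposal is correct and is essentially the paper's argument: both pull back a countable pseudobase at $e$ along a continuous map whose fiber over $e$ recovers the diagonal. The only cosmetic difference is that the paper uses $\star(g,h)=gh$, whose fiber $\star^{-1}(e)=\{\langle g,g^{-1}\rangle:g\in G\}$ is the antidiagonal (then noted to have the same pseudocharacter as the diagonal), whereas your map $\phi(g,h)=g^{-1}h$ hits the diagonal directly.
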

\begin{proof}
The conclusion follows from the fact that $\star^{-1}(e) = \{\langle g,g^{-1}\rangle :g\in G\}$ and the set on the right
has the same pseudocharacter in $G\times G^{-1}$ as the diagonal $\{\langle g,g\rangle :g\in G\}$ in $G\times G$.
\end{proof}

\par\bigskip\noindent
\begin{thm}\label{thm:T5groupwithsequence}
A hereditarily normal topological group with a non-trivial convergent sequence has $G_\delta$-diagonal.
\end{thm}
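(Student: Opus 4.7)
The plan is to argue by contradiction, transplanting the mechanism of Theorem \ref{thm:katetov} into $G$ itself. Suppose $G$ does not have a $G_\delta$-diagonal; then by Lemma \ref{lem:pseudocharacteratneutral}, the neutral element $e$ has uncountable pseudocharacter in $G$. Translating the given non-trivial convergent sequence by the inverse of its limit, one may assume that $s_n \to e$ with $s_n \neq e$ for all $n$. The goal is then to exhibit two disjoint closed sets in the subspace $Z = G \setminus \{e\}$ that cannot be separated by disjoint open sets, thereby contradicting the normality of $Z$ inherited from the hereditary normality of $G$.

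The first step will be to construct a closed subset $T$ of $G$ that contains $e$, is disjoint from $\{s_n\}$, and in which $e$ retains uncountable pseudocharacter. Since $G$ is Hausdorff, for each $n$ I can pick an open neighborhood $W_n$ of $s_n$ together with an open neighborhood $V^{(n)}$ of $e$ with $W_n \cap V^{(n)} = \emptyset$, and set $T = G \setminus \bigcup_n W_n$. The key (and most delicate) claim is that $e$ has uncountable pseudocharacter in $T$: given any countable family $\{V_k\}$ of open neighborhoods of $e$ in $G$, the combined countable family $\{V_k\} \cup \{V^{(n)}\}$ has intersection strictly containing $\{e\}$ by hypothesis; and since $\bigcap_n V^{(n)} \subset V^{(m)}$ is disjoint from $W_m$ for every $m$, this intersection lies entirely in $T$, forcing $T \cap \bigcap_k V_k \supsetneq \{e\}$.

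With such a $T$ in hand, take $A = \{s_n : n \in \mathbb{N}\}$ and $B = T \setminus \{e\}$; both are closed in $Z$ and disjoint. By normality of $Z$ they ought to be separable by disjoint open sets. However, mimicking the proof of Theorem \ref{thm:katetov} in the multiplicative setting, I will show no such separation exists. Given any open $V \supset A$ in $G$, for each $n$ set $V_n = s_n^{-1} V \setminus \{s_n^{-1}\}$, an open neighborhood of $e$ satisfying $s_n V_n \subset V \setminus \{e\}$. Applying the claim of the previous step to $\{V_n\}$ yields a point $y \in T \cap \bigcap_n V_n$ distinct from $e$; then $y \in B$, each $s_n y$ lies in $V \cap Z$, and $s_n y \to y$ in $G$, so $y$ is in the $Z$-closure of $V \cap Z$. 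This rules out $V \cap Z$ belonging to a disjoint open separation of $A$ and $B$, delivering the contradiction.

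I expect the main technical hurdle to be the construction of $T$ and the verification that $e$ still has uncountable pseudocharacter in it; the clever point is to use the auxiliary neighborhoods $V^{(n)}$ whose intersection, though not itself open, is contained in $T$ and so lets one transfer the pseudocharacter hypothesis from $G$ to $T$. The remaining steps are a rather direct translation of Katetov's scheme into the group operation on $G$.
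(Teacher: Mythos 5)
Your proof is correct and follows essentially the same route as the paper: after invoking Lemma \ref{lem:pseudocharacteratneutral} you construct a closed set $T$ avoiding the sequence in which $e$ retains uncountable pseudocharacter, and then run Katetov's pseudocharacter-versus-density argument to produce a non-separated pair of closed sets. The only structural difference is that you carry out Katetov's argument directly inside $G\setminus\{e\}$ using translates $s_n^{-1}V$, whereas the paper applies Theorem \ref{thm:katetov} to the product $S\times T$ and then transports the non-separated pair into $ST\setminus\{e\}$ via the multiplication map $\star$.
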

\begin{proof}
Let $G$ be a topological group with a non-trivial convergent sequence.
Assume that the diagonal $\{\langle g,g\rangle :g\in G\}$ is not a $G_\delta$-set in $G\times G$. We need to
show that $G$ is not hereditarily normal. By Lemma \ref{lem:pseudocharacteratneutral}, the pseudocharacter of 
the neutral element $e$ is uncountable.
Let $\{e_n:n\in \omega\}$ be a sequence that converges to $e$ such that $e_n\not = e$ for every $n\in \omega$.
Such a sequence exists due to homogeneity of $G$ and the theorem's hypothesis.
For every $n\in \omega$, select an open neighborhood $U_n$ of $e$ whose closure does not meet $\{e_n, e_n^{-1}\}$.
Put $T = \bigcap_{n\in \omega} \overline U_n$. Since the pseudocharacter of $e$ is uncountable,
we conclude that $e$ is a limit point for the closed set $T$. Since $T$ is a $G_\delta$-set in $G$ and $\{e\}$ is not,
we conclude that $e$ has uncountable pseudocharacter in $T$.
Put $S=\{e\}\cup \{e_n:n\in \omega\}$. The following three sets are the key objects for 
the remainder of our argument:
$$
Z= (S\times T)\setminus \{\langle e,e\rangle\},\ A = \{\langle e_n,e\rangle:n\in \omega\},\ B = \{\langle e,y\rangle: y\in T\setminus \{e\}\}
$$ 
Since $e$ has uncountable pseudocharacter in $T$, by Katetov's theorem (Theorem \ref{thm:katetov}), $A$ and $B$ are closed and disjoint subsets
of $Z$ that cannot be separated by open sets in $Z$. 
To finish our proof, it suffices to show that
$\star (A)=\{e_n:n\in \omega\}$ and $\star (B)=T\setminus \{e\}$ are closed and disjoint subsets of $\star (Z)=ST\setminus \{e\}$.

Let us first prove that $\{e_n:n\in \omega\}$ is closed in $ST\setminus \{e\}$. For this observe that
$e$ is the only limit point of $\{e_n:n\in \omega\}$ that does not belong to $\{e_n:n\in \omega\}$. Since $e$ does not
belong to $ST\setminus \{e\}$ either, we conclude that $\{e_n:n\in \omega\}$ is closed in $ST\setminus \{e\}$.
The proof that $T\setminus \{e\}$ is closed in $ST\setminus \{e\}$ is analogous.
 
Now let us show that $\{e_n:n\in \omega\}$ and $T\setminus \{e\}$ are disjoint.  For this recall
that $T = \bigcap_{n\in \omega}\overline U_n$, where $\overline U_n$ misses $\{e_n\}$ for each $n$, which completes our proof.
\end{proof}
\par\bigskip
Chaber proved in \cite{Ch} that a countably compact space with a $G_\delta$-diagonal is metrizable.
Chaber's theorem and Theorem \ref{thm:T5groupwithsequence} imply the following statements.
\par\bigskip\noindent
\begin{cor}\label{cor:ccsubspaceinT5group} 
Every countably compact subspace of a hereditarily normal topological group that contains a non-trivial 
convergent sequence is
metrizable.
\end{cor}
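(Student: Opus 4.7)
The plan is to combine Theorem \ref{thm:T5groupwithsequence} with Chaber's theorem in essentially a one-line argument, once the hereditary nature of the $G_\delta$-diagonal property is noted. Let $G$ be a hereditarily normal topological group containing a non-trivial convergent sequence, and let $Y \subset G$ be a countably compact subspace.

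First, I invoke Theorem \ref{thm:T5groupwithsequence} directly to conclude that $G$ itself has a $G_\delta$-diagonal; the hypotheses match verbatim. Next, I observe that having a $G_\delta$-diagonal is a hereditary property: if $\{\langle g,g\rangle:g\in G\}=\bigcap_{n\in\omega} W_n$ for open sets $W_n\subset G\times G$, then intersecting with $Y\times Y$ shows that the diagonal of $Y$ is a $G_\delta$-set in $Y\times Y$. Hence $Y$ has a $G_\delta$-diagonal.

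Finally, I apply Chaber's theorem \cite{Ch}, which states that every countably compact space with a $G_\delta$-diagonal is metrizable, to conclude that $Y$ is metrizable.

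There is no real obstacle here; the work has already been done in Theorem \ref{thm:T5groupwithsequence}, and the corollary only requires packaging that result with the hereditary property of $G_\delta$-diagonals and the cited theorem of Chaber.
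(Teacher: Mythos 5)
Your proof is correct and matches the paper's argument exactly: the paper likewise derives this corollary by combining Theorem \ref{thm:T5groupwithsequence} with Chaber's theorem, with the hereditariness of the $G_\delta$-diagonal left implicit. Nothing further is needed.
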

\par\bigskip\noindent
\begin{cor}\label{cor:ccT5group}
{\rm (\cite[Corolary 10]{Eis}) } Every countably compact hereditarily normal  topological group that contains 
a non-trivial convergent sequence is metrizable.
\end{cor}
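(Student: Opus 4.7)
The plan is to observe that this corollary is almost immediate from what has already been established earlier in the section, namely Theorem \ref{thm:T5groupwithsequence} together with Chaber's theorem, or equivalently, by a direct appeal to Corollary \ref{cor:ccsubspaceinT5group}. Let $G$ be a countably compact, hereditarily normal topological group that contains a non-trivial convergent sequence. I would argue that $G$ satisfies the hypotheses of Theorem \ref{thm:T5groupwithsequence}, which immediately yields that the diagonal of $G$ is a $G_\delta$-set in $G\times G$.

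From there, I would invoke Chaber's theorem from \cite{Ch}: any countably compact space with a $G_\delta$-diagonal is metrizable. Since $G$ is countably compact by hypothesis and has $G_\delta$-diagonal by the previous step, metrizability of $G$ follows at once.

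Alternatively, and even more economically, one can skip the intermediate $G_\delta$-diagonal step entirely: the group $G$ is a countably compact subspace of itself, and itself is a hereditarily normal topological group containing a non-trivial convergent sequence, so Corollary \ref{cor:ccsubspaceinT5group} applies directly to $G$ as a subspace of $G$ and delivers the conclusion.

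There is no real obstacle here; the content of the statement has already been absorbed into Theorem \ref{thm:T5groupwithsequence} and Corollary \ref{cor:ccsubspaceinT5group}. The only thing worth being slightly careful about is ensuring that the hypothesis of a non-trivial convergent sequence is used correctly, i.e. that such a sequence exists in $G$ itself (not merely in some ambient space), which is exactly what is being assumed. Thus the proof reduces to a one-line citation of the previous results.
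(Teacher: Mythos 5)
Your proposal is correct and follows exactly the paper's route: the paper derives this corollary from Theorem \ref{thm:T5groupwithsequence} combined with Chaber's theorem (equivalently, by specializing Corollary \ref{cor:ccsubspaceinT5group} to $G$ itself). Nothing further is needed.
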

\par\bigskip
In \cite{NSV}, Nyikos, Soukup, and Velickovic proved that under  the Proper Forcing Axiom, every countably compact hereditarily normal  space is sequentially compact. This theorem and 
Corollary \ref{cor:ccsubspaceinT5group}  imply  the following statement.
\par\bigskip\noindent
\begin{cor}\label{cor:ccT5PFA}
Assume the Proper Forcing Axiom. Then
every countably compact subspace of a  hereditarily normal topological group is metrizable. 
\end{cor}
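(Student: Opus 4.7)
The plan is to chain three ingredients: Corollary \ref{cor:ccsubspaceinT5group} of this paper, the Nyikos--Soukup--Velickovic theorem that under PFA every countably compact hereditarily normal space is sequentially compact, and a trivial observation that an infinite sequentially compact space contains a non-trivial convergent sequence. Throughout, let $G$ be a hereditarily normal topological group and let $K\subseteq G$ be countably compact.

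First I would dispose of the trivial case: if $K$ is finite, it is a finite $T_1$ space, hence discrete and metrizable. So assume $K$ is infinite. Next, since hereditary normality is hereditary, $K$ itself is a hereditarily normal countably compact space, so the PFA theorem of Nyikos--Soukup--Velickovic applies to $K$ and yields that $K$ is sequentially compact.

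Now I would extract a non-trivial convergent sequence inside $K$. Since $K$ is infinite and $T_1$, pick any one-to-one sequence $(x_n)_{n\in\omega}$ of points of $K$. By sequential compactness of $K$, some subsequence $(x_{n_k})_k$ converges to a point $x\in K$. Because the subsequence is injective, at most one of its terms equals the limit $x$; deleting that term (if any) gives a non-trivial convergent sequence lying in $K$, and therefore in $G$. Thus $G$ is a hereditarily normal topological group containing a non-trivial convergent sequence.

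With this in hand, Corollary \ref{cor:ccsubspaceinT5group} applies directly to the countably compact subspace $K$ of $G$, giving that $K$ is metrizable. There is no real obstacle here; the only mildly delicate point is making sure that the convergent sequence obtained from sequential compactness is genuinely non-trivial, which is handled by the injectivity argument above.
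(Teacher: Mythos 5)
Your proposal is correct and follows essentially the same route as the paper, which obtains this corollary by combining the Nyikos--Soukup--Velickovic theorem with Corollary \ref{cor:ccsubspaceinT5group}. You simply make explicit the routine steps the paper leaves implicit, namely the finite case and the verification that sequential compactness of the infinite $T_1$ (indeed Hausdorff) subspace yields a genuinely non-trivial convergent sequence.
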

\par\bigskip
Next we shall prove that every compact subset of a hereditarily normal topological 
group is metrizable. We start with the following lemma.
\begin{lem}\label{lem:katetov}
Let $G$ be a hereditarily normal topological group and let $S$ and $T$ be its compact subspaces.
Suppose that $S$ is separable,  $s$ is a limit point of $S$, and $t\in T$ has uncountable character in $T$. Then
there exists a compactum $C\subset T$ such that $t$ has  uncountable character in $C$ and $sC\subset St$.
\end{lem}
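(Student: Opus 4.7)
The plan is to show that the compactum $D := T \cap s^{-1}St$ itself serves as the desired $C$. It is immediate that $D$ is compact (the image $s^{-1}St$ of $S$ under the translation homeomorphism $x\mapsto s^{-1}xt$ is compact, hence closed in $G$, so $D$ is closed in $T$), that $t\in D$, and that $sD\subset St$; the nontrivial assertion is that $t$ has uncountable character in $D$. A preliminary observation is that the hypotheses of the lemma preclude $G$ from having any non-trivial convergent sequence: such a sequence would, by Theorem \ref{thm:T5groupwithsequence}, endow $G$ with a $G_\delta$-diagonal, whence the compactum $T\subset G$ would be metrizable by Chaber's theorem, contradicting the uncountable character of $t$ in $T$.

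Suppose, for contradiction, that $t$ has countable (equivalently, countable pseudo-) character in $D$. Since no sequence in $D$ can converge nontrivially to $t$, it follows that $t$ is isolated in $D$; hence $D\setminus\{t\}$ is closed in $T$. By normality of the compact Hausdorff space $T$, choose an open $U\subset T$ with $t\in U$ and $\bar U\cap D=\{t\}$, and set $R:=\bar U$. Then $R$ is a compact subspace of $T$ containing $t$ with $R\cap D=\{t\}$. Moreover, $t$ retains uncountable pseudocharacter in $R$: were $\{t\}=R\cap\bigcap_n O_n$ for $O_n$ open in $T$, then $U\cap\bigcap_n O_n$ would be a countable intersection of open subsets of $T$ equal to $\{t\}$, contradicting the uncountable pseudocharacter of $t$ in $T$.

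Apply Theorem \ref{thm:katetov} to $S$ and $R$: the closed disjoint sets $A'=(S\times\{t\})\setminus\{\langle s,t\rangle\}$ and $B'=(\{s\}\times R)\setminus\{\langle s,t\rangle\}$ cannot be separated by open sets in $Z':=(S\times R)\setminus\{\langle s,t\rangle\}$. Under the multiplication map $\mu(a,b)=ab$, we have $\mu(A')=St\setminus\{st\}$ and $\mu(B')=sR\setminus\{st\}$. Both are closed in the subspace $G\setminus\{st\}$ (as $St$ and $sR$ are compact, hence closed in $G$), and they are disjoint: if $s't=sy$ with $s'\in S\setminus\{s\}$ and $y\in R\setminus\{t\}$, then $y=s^{-1}s't\in T\cap s^{-1}St=D$, forcing $y\in R\cap D=\{t\}$, a contradiction. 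Hereditary normality of $G$ makes $G\setminus\{st\}$ normal, so disjoint open supersets $U_A,U_B$ of $\mu(A'),\mu(B')$ exist there; pulling them back via $\mu$ gives disjoint open subsets $\mu^{-1}(U_A)\cap Z'$ and $\mu^{-1}(U_B)\cap Z'$ of $Z'$ containing $A'$ and $B'$, contradicting Theorem \ref{thm:katetov}.

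The main obstacle in carrying out this plan is engineering the auxiliary compactum $R$: it must meet $D$ only at $t$ so that the Katetov images $\mu(A')$ and $\mu(B')$ become disjoint in $G$, yet it must still witness the uncountable pseudocharacter of $t$ so that Katetov's theorem applies. The reduction---via Theorem \ref{thm:T5groupwithsequence}---to the case in which $t$ is isolated in $D$ is exactly what makes the simple choice $R=\bar U$ feasible.
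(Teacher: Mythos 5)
Your proof is correct and follows essentially the same route as the paper's: the same candidate $C=\{g\in T: sg\in St\}=T\cap s^{-1}St$, and the same endgame in which Katetov's theorem (Theorem \ref{thm:katetov}) together with normality of $G\setminus\{st\}$ forces $St\setminus\{st\}$ and the $s$-translate of the auxiliary compactum to meet, producing a forbidden point of $C\setminus\{t\}$. The only divergence is in how that auxiliary compactum (meeting $C$ exactly in $\{t\}$ yet retaining uncountable pseudocharacter at $t$) is manufactured: the paper takes $F=\bigcap_n\overline{U_n}$ directly from the assumed countable character of $t$ in $C$, whereas you first invoke Theorem \ref{thm:T5groupwithsequence} and Chaber's theorem to exclude non-trivial convergent sequences, deduce that $t$ is isolated in $C$, and take a single closed neighborhood $R=\overline{U}$ --- a valid, non-circular, but less self-contained detour.
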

\begin{proof}
Let us show that $C = \{g\in T: sg\in St\}$ is as desired. Since $S$ and $T$ are compact, we conclude that $C$ is compact. Clearly $t\in C$. It is left to show that $t$ has uncountable character in $C$.
We assume the contrary. Then there exists a countable family $\{U_n:n\in \omega\}$ of neighborhoods of $t$ 
in $T$ such that $F = \bigcap\{\overline U_n:n\in \omega\}$ misses $C\setminus \{t\}$. Since $\chi(t,T)$ is uncountable, $\chi(t,F)$ is uncountable as well.
To reach a contradiction it suffices to find $g\in F\setminus \{t\}$ such that $sg\in St$.

For this put $A=[S\times \{t\}]\setminus \{\langle s,t\rangle\}$ and $B=[\{s\}\times F]\setminus \{\langle s,t\rangle\}$. 
By Katetov's theorem (Theorem \ref{thm:katetov}), the sets $A$ and $B$ are closed and disjoint subsets of 
$Z=[S\times F]\setminus \{\langle s,t\rangle\}$ that cannot be separated by disjoint open neighborhoods in $Z$. 
Put $Z_1 = Z\setminus \star^{-1}(st)$. Clearly $Z_1$ is open in  $Z$ and contains both $A$ and $B$.
Therefore $A$ and $B$ cannot be separated by open neighborhoods in $Z_1$ either.
Since $G$ is hereditarily normal, 
the closures of $\star (A)=St\setminus \{st\}$ and $\star (B)=sF\setminus \{st\}$  in $\star (Z_1)\subset G\setminus \{st\}$ must meet.
Since $St$ is compact and $st\not \in G\setminus \{st\}$, we conclude that $St\setminus \{st\}$ is closed in $G\setminus \{st\}$.
The proof that $sF\setminus \{st\}$ is closed in $G\setminus \{st\}$ is analogous.
Since $St\setminus \{st\}$ and $sF\setminus \{st\}$ are closed in $G\setminus \{st\}$,
we conclude that  they meet. Therefore, we can find 
$g\in F\setminus \{t\}$ such that
$sg\in St$, which completes the proof.
\end{proof}

\par\bigskip
We are now ready to prove our main result. 
For reference we will first formulate an often-used  corollary from the following well-known result of Shapirovskii's Theorem \cite[Theorem 2]{Sha}, in which a {\it $\pi$-base at $x\in X$} is a collection $\mathcal U$ of non-void
open subsets of $X$ such that every open neighborhood of $x$ in $X$ contains an element of $\mathcal U$.

\par\bigskip\noindent
\begin{thm}\label{thm:shapirovskii}{\rm ({\bf Shapirovskii, \cite[corollary to Theorem 2]{Sha}})}
Any hereditarily normal compact space has a point with countable $\pi$-base.
\end{thm}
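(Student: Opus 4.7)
The plan is to reduce the theorem to two standard ingredients, of which only the first requires the hereditary-normality hypothesis. Ingredient (A): every hereditarily normal compact space has countable tightness at each point. Ingredient (B): in any compact $T_2$ space $X$ there is a point $x$ with $\pi\chi(x,X)\le t(x,X)$, the pointwise form of Shapirovskii's inequality. Once (A) and (B) are available, pick a point $x$ as supplied by (B), apply (A) to get $t(x,X)\le\omega$, and conclude $\pi\chi(x,X)\le\omega$, which is precisely the claim of a countable $\pi$-base at $x$.

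To establish (A) I would argue by contradiction. If $X$ were compact $T_5$ with uncountable tightness, then by Arhangelskii's theorem (for compact $T_2$ spaces, tightness equals the supremum of the cardinalities of free sequences) $X$ would contain a free sequence $F=\{x_\alpha:\alpha<\omega_1\}$; compactness would then supply a complete accumulation point $p$ of $F$. The idea is to imitate the Katetov construction underlying Theorem \ref{thm:katetov}: inside the open subspace $Z=X\setminus\{p\}$, manufacture two closed disjoint subsets that cannot be separated by disjoint open neighborhoods in $Z$, contradicting the hereditary normality of $X$. The two sets would be produced from the asymmetry at $p$ built into the free sequence, namely a countable ``tail direction'' $\{x_{\alpha_n}:n\in\omega\}$ approaching $p$ and a transversal approach to $p$ furnished by the complete-accumulation structure; the failure of separation would then follow from an uncountable-pseudocharacter pigeonhole argument paralleling the closing step in the proof of Katetov's theorem.

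For ingredient (B) I would quote Shapirovskii's Theorem 2 in \cite{Sha} as a black box; its proof is a delicate transfinite construction of a descending $\pi$-base at a well-chosen free-sequence accumulation point and is essentially orthogonal to the techniques of the present paper. The main obstacle is step (A): cooking up the non-separable pair of closed sets in $X\setminus\{p\}$ from a bare free sequence, without the ambient product structure that made the analogous step in Theorem \ref{thm:katetov} almost automatic. Concretely, one must show that any open set around the would-be ``tail'' set still has closure meeting any neighborhood of the ``transversal'' set at $p$, and this forces one to leverage both the complete-accumulation property of $p$ and the definition of freeness simultaneously.
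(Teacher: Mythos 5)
The paper offers no proof of this statement at all: it is imported as a black box, attributed to Shapirovskii's Theorem~2 in \cite{Sha}. So the only question is whether your argument stands on its own, and it does not: ingredient (A) is false. The ordinal space $\omega_1+1$ is compact and, being linearly ordered, is monotonically normal and hence hereditarily normal; yet its tightness at the top point is $\omega_1$, since $\omega_1\in\overline{[0,\omega_1)}$ while no countable subset of $[0,\omega_1)$ has $\omega_1$ in its closure. By Arhangel'skii's free-sequence characterization, $\omega_1+1$ therefore contains a free sequence of length $\omega_1$ with a complete accumulation point, and nevertheless is $T_5$ --- so the Katetov-style pair of non-separated closed sets you hope to manufacture in $X\setminus\{p\}$ simply cannot exist in general. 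The ``main obstacle'' you flag at the end is not a technical difficulty to be overcome; it is the symptom of aiming at a false statement. (Note that $\omega_1+1$ still satisfies the theorem being proved --- its isolated points, for instance, have countable $\pi$-bases, even though the point $\omega_1$ has neither countable tightness nor a countable $\pi$-base. This shows the conclusion genuinely requires selecting a good point and cannot be obtained by establishing a local property at every point, which is what your scheme (A)$+$(B) would do.)

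Your ingredient (B) is a correct theorem of Shapirovskii ($\pi\chi(x,X)\le t(X)$ for compact Hausdorff $X$), but it is not the one this corollary rests on. The route indicated by the citation is a dichotomy on $\pi$-character itself: Shapirovskii's Theorem~2 yields that if \emph{every} point of a compact Hausdorff space $X$ has uncountable $\pi$-character, then $X$ maps continuously onto $[0,1]^{\omega_1}$; the companion fact is that a compact space admitting such a surjection is never hereditarily normal (one passes to a closed subspace mapping irreducibly, hence separable, onto the cube and runs a Katetov-type separation argument there). Contrapositively, a hereditarily normal compactum must have a point of countable $\pi$-character. If you want to supply a proof rather than cite the result, that is the decomposition to pursue; the reduction through tightness is a dead end.
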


\par\bigskip\noindent
\begin{thm}\label{thm:compactainT5groups}
Every compact subset of a hereditarily normal topological group is metrizable.
\end{thm}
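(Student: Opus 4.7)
The strategy is to show that any infinite compact subspace $K$ of $G$ must contain a non-trivial convergent sequence. Once this is achieved, the sequence lies in $G$; Theorem~\ref{thm:T5groupwithsequence} will give $G$ a $G_\delta$-diagonal, which $K$ inherits; and Chaber's theorem then forces $K$ to be metrizable. I would therefore argue by contradiction, assuming $K$ to be infinite, compact, and free of non-trivial convergent sequences.

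The construction I would pursue begins with Shapirovskii's theorem (Theorem~\ref{thm:shapirovskii}), applied to $K$ to obtain a point $p\in K$ with countable $\pi$-base; passing to the closed set of non-isolated points of $K$ if necessary and reapplying Shapirovskii, I may assume $p$ is a limit point of $K$. If $p$ had countable character in $K$, a nested local base would at once yield a non-trivial sequence converging to $p$, contradicting the standing assumption; so $p$ must have uncountable character. I would then fix a countable $\pi$-base $\{V_n\}$ at $p$, choose $x_n\in V_n\setminus\{p\}$, and set $S=\overline{\{x_n:n\in\omega\}}\subset K$ --- a separable compact subspace with $p$ as a limit point. Applying Lemma~\ref{lem:katetov} to the quadruple $(S,p,K,p)$, with the uncountable character of $p$ in $K$ playing the role required of $t$, yields a compact $C\subset K$ in which $p$ still has uncountable character and satisfying $pC\subset Sp$. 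The homeomorphism $\rho:g\mapsto p^{-1}gp$ of $G$ fixes $p$ and embeds $C$ as a closed subspace $\rho(C)$ of $S$, so $p$ continues to have uncountable character in $S$ itself.

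From here I would iterate: inside $\rho(C)$, locate a fresh Shapirovskii witness, build a new separable compact around it, reapply Lemma~\ref{lem:katetov} to shrink to a compact $C_1\subset\rho(C)$, and continue. The nested compacta carry along the group-theoretic closure relations $pC_n\subset S_np$ supplied at each step, and the aim is to extract, at some finite stage, a genuine convergent sequence --- contradicting the standing assumption. The principal obstacle is exactly this extraction: countable $\pi$-character alone does not yield a sequential limit in an arbitrary compact Hausdorff space (as $\beta\omega$ illustrates, though it fails hereditary normality). The proof must exploit the synergy between hereditary normality of $G$ and the group operation --- precisely the mechanism of Lemma~\ref{lem:katetov}, in which Katetov's product-space argument is pushed into $G$ through the multiplication map --- to convert a cluster point of a $\pi$-base-generated sequence into a genuine convergent sequence. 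Making this final upgrade precise is the technical heart of the proof.
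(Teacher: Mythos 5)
Your opening reduction is exactly the paper's: assume $G$ has no non-trivial convergent sequence, aim for a contradiction, since otherwise Theorem~\ref{thm:T5groupwithsequence} plus Chaber's theorem finishes the job. Your use of Shapirovskii's theorem and of Lemma~\ref{lem:katetov} is also in the right spirit. But the proposal has a genuine gap, and you have located it yourself: you never identify the mechanism that actually produces the contradiction, and the iteration you sketch cannot supply one. Each application of Lemma~\ref{lem:katetov} in your chain returns a compact set sitting inside (a translate/conjugate of) the \emph{same} separable compactum $S$ you started from, so there is no tension accumulating from stage to stage; and your stated goal of ``upgrading a cluster point to a convergent sequence'' is not what the argument needs to do --- no convergent sequence is ever extracted at the end of the paper's proof.

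The missing idea is to manufacture \emph{two} separable compacta $A$ and $B$ with $A\cap B=\{e\}$ such that $e$ is a limit point of uncountable character in each. The paper does this by taking a separable compactum $Z$, noting (since there are no convergent sequences) that $Z'$ has no isolated points, using Shapirovskii to put a countable $\pi$-base $\{P_n\}$ at $e$ in $Z'$, and forming the sets $F_n=\overline P_n\,\overline P_n^{-1}$, which are a network at $e$ each having $e$ as a limit point. Either the finite intersections $F_0\cap\dots\cap F_n$ all exceed $\{e\}$ --- which yields a convergent sequence outright --- or at some first failing stage one splits off two compacta meeting only at $e$, which normality of $ZZ^{-1}\setminus\{e\}$ lets one fatten into the desired separable $A$ and $B$. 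With $A$ and $B$ in hand, Lemma~\ref{lem:katetov} is applied twice, with $s=t=e$: first to $(S,T)=(A,A)$, giving a compact $A_1\subset A$ with $e$ of uncountable character and $eA_1\subset Ae$; then to $(S,T)=(B,A_1)$, giving $A_2\subset A_1$ with $eA_2\subset Be$, i.e.\ $A_2\subset B$. Since $e$ has uncountable character in $A_2$, the set $A_2$ contains a point $a\neq e$, and $a\in A\cap B$ --- contradicting $A\cap B=\{e\}$. It is precisely the interplay between the two disjoint-except-at-$e$ sets and the inclusion $\star(B)\subset \star(A)$ forced by hereditary normality that closes the argument; a single nested chain, as in your sketch, cannot.
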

\begin{proof}
Let $G$ be a hereditarily normal topological group and let $X$ be its compact subset.
We may assume that $X$ is infinite.
By virtue of Theorem \ref{thm:T5groupwithsequence} it suffices to find a non-trivial convergent sequence in $G$.
Assume that no such sequences exist in $G$.

\par\bigskip\noindent
{\it Claim 1. 
If $Z$ is an infinite compact subspace of $G$ then its derived set $Z'$ has no isolated points.
}
\par\smallskip\noindent
If $Z'$ has an isolated point $p$ then $p$ is the limit of a convergent sequence from
$Z\setminus Z'$. Since no such sequences exist, the claim is proved.

\par\bigskip\noindent
{\it Claim 2.
There exist separable compact subsets $A$ and $B$ of $G$ such that $A\cap B=\{e\}$ and $e$ is a limit point for both $A$ and $B$.
}
\par\smallskip\noindent
Let $Z$ be an infinite separable compact subset of $X$.
By Claim 1, the derived set $Z'$ is an infinite compactum without isolated points.
By Shapirovskii's theorem there exists an element in $Z'$ that has a countable $\pi$-base in $Z'$.
By homogeneity of $G$, we can find such $Z$ with an additional requirement
that the neutral element  $e$ is in  $Z'$ and has a countable $\pi$-base in $Z'$.
Fix a collection $\{P_n:n\in\omega\}$ of subsets of $Z'$ that form a $\pi$-base at $e$ in $Z'$.
Put $F_n = \overline P_n\overline P^{-1}_n$. 
\par\medskip\noindent
\begin{description}
	\item[\it Subclaim] {\it $\{F_n:n\in \omega\}$ is a network at $e$ consisting of compact sets, and $e$ is a limit point for every $F_n$.}
\par\smallskip\noindent	
The set $F_n$ is compact because $\overline P_n$ is a closed subset of $Z'$, which is compact.
Thus, $F_n$ is the image of a compactum under the continuous map $\star$. To show that $F_n$'s form a network,
fix any neighborhood $U$ at $e$. One can find an open neighborhood $V$ of $e$ such that
$VV^{-1}\subset U$.  Then there exists an element $P_n$ of our $\pi$-base such that $\overline P_n\subset V$. We then have 
$e\in F_n=\overline P_n\overline P_n^{-1}\subset V_nV_n^{-1}\subset U$.
\par\smallskip\noindent
Finally, to show that $e$ is a limit point for $F_n$, recall that $P_n$ is a non-void open set of $Z'$ and $Z'$ has no isolated points. Pick any $p\in P_n$. Then $P_np^{-1}$ contains $e$ and is a subset of $F_n$. 
Clearly $e$ is a limit point for $P_np^{-1}$. The proof of the subclaim is complete.
\end{description}
\par\medskip\noindent
We are now ready to  construct the desired sets $A$ and $B$. For this we consider two cases.
\begin{description}
	\item[\it Case 1] {\it The assumption of this case is that $F_0\cap ... \cap F_n\not = \{e\}$ for all $n\in \omega$.}
	Then for each $n\in \omega$, we can pick $x_n\in F_0\cap ... \cap F_n$ which is distinct from $e$. Clearly, $x_n\to e$, which contradicts our  assumption that $G$ has no non-trivial convergent sequences.
	
	\item[\it Case 2] {\it The assumption of this case is a failure of  Case 1}. Then there exists the smallest
	$n$ such that $e$ is not a limit point for $F_0\cap ...\cap F_{n+1}$. Then there exists a neighborhood  $U$ of $e$ such that
	$\overline U\cap F_0\cap ... \cap F_{n+1}=\{e\}$. Put $C = \overline U \cap F_0\cap ...\cap F_n$
	and $D=\overline U\cap F_{n+1}$. We have $C\cap D=\{e\}$. By the property of $n$ and Subclaim,
	$e$ is a limit point for both $C$ and $D$. To finish the proof of existence of sets with desired properties 
	it suffices to place $C$ and $D$  in separable compact subsets of $G$ whose only common element is $e$. For this
	recall that
	$F_0,...,F_{n+1}\subset ZZ^{-1}$ and $Z$ is a separable compactum. Further, $(ZZ^{-1})\setminus\{e\}$
	is normal. Therefore, there exist open neighborhoods $V$ and $W$ of $C$ and $D$, respectively, in $ZZ^{-1}$ whose
	closures in $ZZ^{-1}$ have only one point in common, namely, $e$.
	Put $A=\overline V$ and $B=\overline W$. Since $Z$ is separable, $A$ and $B$ are separable. The rest of the desired
	properties are obvious. This completes Case 2 and proves the claim.
\end{description}
\par\bigskip
Let $A$ and $B$ be as in Claim 2. Recall that our assumption is that $G$ does not have any non-trivial convergent sequence.
Therefore, the fact that $e$ is a limit point of compact sets $A$ and $B$ imply that $e$ has uncountable character
both in $A$ and in $B$.

We will finish our argument by  consecutive applications of 
Lemma \ref{lem:katetov} as follows: Put $S= A$, $T=A$, and $s=t=e$. The sets $S$ and  $T$ together with points $s$ and $t$ satisfy the hypothesis of Lemma \ref{lem:katetov}. Therefore, there exists a compact set $A_1\subset A$ such that (1) and (2) hold:
\begin{description}
	\item[\rm (1)] $t=e$ has uncountable character in $A_1$.
	\item[\rm (2)] $sA_1\subset St$.
\end{description}
Note that (2) implies the following:
\begin{description}
	\item[\rm (3)] $A_1\subset A$.
\end{description}
Next, for our second and last application of Lemma \ref{lem:katetov} we put $S=B$, $T=A_1$, and $s=t=e$.
By (1) and the properties of $B$ listed in Claim 2, these sets and points satisfy the hypothesis of Lemma \ref{lem:katetov}.
Therefore, there exists a compact set $A_2$ such that (4),(5), and (6) hold:
\begin{description}
	\item[\rm (4)] $A_2\subset A_1$.
	\item[\rm (5)] $t=e$ has uncountable character in $A_2$.
	\item[\rm (6)] $sA_2\subset St$
\end{description}
Note that (6) implies the following:
\begin{description}
	\item[\rm (7)] $A_2\subset B$.
\end{description}
By (5), there exists $a\in A_2$ which is distinct from $e$. By (7), $a\in B$. By (3) and (4), $a\in A$.
Therefore,  $a\in A\cap B$ and is distinct from $e$, which contradicts the fact that $A\cap B=\{e\}$. 
This contradiction completes our proof.
\end{proof}

\par\bigskip
Observe that the first paragraph of the proof of Theorem \ref{thm:compactainT5groups} suggests to re-phrase
Theorem \ref{thm:compactainT5groups} in a way that is more descriptive of the internal
structure (with regard to convergence) of hereditarily normal topological groups.
\par\bigskip\noindent
\begin{thm}\label{thm:mainrephrased}
Let $G$ be a hereditarily normal topological group. Then either $G$ has a non-trivial convergent sequence and a $G_\delta$-diagonal,
or $G$ has no non-trivial convergent sequences and every compact subset of $G$ is finite. In either case, every compact subset of $G$ is metrizable.
\end{thm}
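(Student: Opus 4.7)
The plan is to establish the stated dichotomy and then observe that metrizability of every compact subspace is automatic in both cases. I would begin by distinguishing the two mutually exclusive possibilities for a hereditarily normal topological group $G$: either $G$ contains a non-trivial convergent sequence, or it does not.

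In the first case, Theorem \ref{thm:T5groupwithsequence} applies directly and yields that $G$ has a $G_\delta$-diagonal. Since having a $G_\delta$-diagonal is a hereditary property, every compact subspace of $G$ inherits it. Every compact space with a $G_\delta$-diagonal is metrizable, which is a special case of Chaber's theorem cited above, so in this case every compact subset of $G$ is metrizable.

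In the second case, I would appeal to the proof of Theorem \ref{thm:compactainT5groups} to conclude that every compact subspace of $G$ must be finite. That proof begins precisely with a tacitly infinite compactum $X\subset G$ together with the assumption that $G$ admits no non-trivial convergent sequence, and then derives a contradiction via Claim 1, Claim 2, and two applications of Lemma \ref{lem:katetov}. Read contrapositively, the absence of non-trivial convergent sequences in $G$ forces every compact subset to be finite, and finite spaces are trivially metrizable.

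The main obstacle, namely ruling out an infinite compact subspace in a hereditarily normal topological group devoid of non-trivial convergent sequences, is precisely the content already established in the proof of Theorem \ref{thm:compactainT5groups}. Hence the present theorem amounts to a clean repackaging of that argument together with a direct invocation of Theorem \ref{thm:T5groupwithsequence}, with no further technical work required beyond the observation that the two cases of the dichotomy are exhaustive by construction.
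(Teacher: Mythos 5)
Your proposal is correct and follows essentially the same route as the paper, which presents this theorem as a direct repackaging of Theorem \ref{thm:T5groupwithsequence} together with the contrapositive reading of the contradiction argument in the proof of Theorem \ref{thm:compactainT5groups} (which assumes an infinite compactum and no non-trivial convergent sequences and derives a contradiction). The only point worth stating explicitly, as you do, is that the $G_\delta$-diagonal property is hereditary and suffices for metrizability of compacta.
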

\par\bigskip\noindent
\par\bigskip\noindent
{\bf Acknowledgment.} 
The author would like to thank the referee for many helpful remarks, corrections, and suggestions.

\end{document}